\newtheorem{theorem}{Theorem}
\newtheorem{lem}{Lemma}
\theoremstyle{definition}
\newcommand{\Romannum}[1]{\uppercase\expandafter{\romannumeral #1}}
\numberwithin{equation}{section}
\newcommand{\LL}{\mathcal{L}}
\newcommand{\PP}{\mathbb{P}}
\begin{document}

\begin{frontmatter}


\title{Homomorphism of independent random variable convolution and matrix multiplication}
\author[fzu]{Yue Liu}\ead{liuyue@fzu.edu.cn}

\address[fzu]{School of Mathematics and Statistics, Fuzhou University, Fuzhou, 350116, China}

\date{}

\begin{abstract}
  A map is given showing that convolutions of independent random variables over a finite group and matrix multiplications of doubly stochastic matrices are homomorphic. As an application, a short proof is given to the theorem that the limiting distributions of stochastic processes with stationary independent increments over a finite group are always uniform.
 \vskip3pt \noindent{\it{AMS classification:}} 60G50, 15B51

\end{abstract}
  
\begin{keyword}
  convolution; doubly stochastic matrix; random walk; finite group
\end{keyword}

\end{frontmatter}

\section*{The Result}
Convolution of random variables is a basic operation in probability theory, especially the convolutions of independent ones. In this paper, attention is restricted to convolutions of independent random variables over finite groups.

Let $G = \{g_1,\ldots,g_n\}$ be a finite group. By Caley Theorem, let \[\phi:\ G  \rightarrow  \Sigma,\ g_k\mapsto \sigma_k\] be an isomorphism, where $\sigma_k$ is the left transition of $G$ with the element $g_k$, thus
$\Sigma = \{\sigma_1,\ldots,\sigma_n\}$ can be also regarded as a permutation subgroup of the symmetric group $S_n$ over $[n]$. For each $k$, there is an order $n$ permutation matrix $P_{\sigma_k}=\left(p_{ij}^{(k)}\right)_{n\times n} $ corresponding to $\sigma_k$, where
\[p_{ij}^{(k)} = \left\{\begin{array}{ll}
  1, & i = \sigma_k(j),\\
  0, & \text{otherwise.}
\end{array} \right. \]
 
Let $X$ be a random variable over $G$. Write $p_k = \PP[X = g_k],\ k =1,\ldots,n$. Define the {\bf \emph{convolution matrix}} of $X$, denoted by $Con(X)$, as
\[Con(X) = \sum_{k=1}^n p_k P_{\sigma_k}.\]
It is easy to see that $Con(X)$ is a doubly stochastic matrix. The following lemma shows that convolutions of independent random variables over a finite group and matrix multiplications of doubly stochastic matrices are homomorphic.

\begin{lem}\label{lem:conv_matrix}
  Let $X,Y$ be two independent random variables over a finite group $G$. Then 
  \[
   Con(X\cdot Y)=Con(X)Con(Y),
   \] 
   where $ X\cdot Y$ is the convolution of $X$ and $Y$.
\end{lem}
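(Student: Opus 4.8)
The plan is to reduce everything to two elementary facts: that permutation matrices compose in the way their permutations do, and that the Cayley embedding $\phi$ turns group multiplication into composition of left translations. First I would record that for any $i,j$ one has $P_{\sigma_i}P_{\sigma_j}=P_{\sigma_i\circ\sigma_j}$. This is the standard multiplicativity of the permutation representation, and it is worth checking against the convention fixed in the excerpt: writing $(P_{\sigma_i}P_{\sigma_j})_{ac}=\sum_b p^{(i)}_{ab}p^{(j)}_{bc}$, the only nonzero term occurs when $b=\sigma_j(c)$ and $a=\sigma_i(b)$, i.e. when $a=(\sigma_i\circ\sigma_j)(c)$, which is exactly the $(a,c)$ entry of $P_{\sigma_i\circ\sigma_j}$. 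Next I would note that, since $\sigma_k$ is left translation by $g_k$, we have $(\sigma_i\circ\sigma_j)(x)=g_i(g_jx)=(g_ig_j)x$, so $\sigma_i\circ\sigma_j=\sigma_m$ whenever $g_ig_j=g_m$; equivalently $\phi$ is the promised isomorphism. Hence $P_{\sigma_i}P_{\sigma_j}=P_{\sigma_m}$ exactly when $g_ig_j=g_m$.

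With these in hand, write $p_k=\PP[X=g_k]$ and $q_k=\PP[Y=g_k]$. Expanding the matrix product bilinearly,
\[
Con(X)\,Con(Y)=\Bigl(\sum_{i=1}^n p_i P_{\sigma_i}\Bigr)\Bigl(\sum_{j=1}^n q_j P_{\sigma_j}\Bigr)=\sum_{i=1}^n\sum_{j=1}^n p_i q_j\,P_{\sigma_i}P_{\sigma_j}=\sum_{i,j} p_i q_j\,P_{\sigma_i\circ\sigma_j}.
\]
Now I would reindex this sum by collecting, for each $m$, all pairs $(i,j)$ with $g_ig_j=g_m$ (equivalently $\sigma_i\circ\sigma_j=\sigma_m$), obtaining
\[
Con(X)\,Con(Y)=\sum_{m=1}^n\Bigl(\sum_{\substack{i,j:\,g_ig_j=g_m}} p_i q_j\Bigr) P_{\sigma_m}.
\]
Finally, by the independence of $X$ and $Y$, the inner coefficient is precisely $\sum_{g_ig_j=g_m}\PP[X=g_i]\PP[Y=g_j]=\PP[X\cdot Y=g_m]$, so the right-hand side is by definition $Con(X\cdot Y)$, which finishes the proof.

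I do not anticipate a genuine obstacle here; the argument is a direct computation. The one point that needs care — and the only place the hypotheses are actually used — is making sure the compositional convention for $P_{\sigma}$ matches the group operation in the right order (so that $P_{\sigma_i}P_{\sigma_j}$, and not $P_{\sigma_j}P_{\sigma_i}$, corresponds to $g_ig_j$), and invoking independence exactly once to factor the joint law. Everything else is bookkeeping with finite sums.
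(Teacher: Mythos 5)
Your proof is correct and is exactly the "direct computation" the paper invokes without writing out: multiplicativity of the permutation representation, the Cayley identity $\sigma_i\circ\sigma_j=\sigma_m$ for $g_ig_j=g_m$, bilinear expansion, and one use of independence. Your attention to the order convention (that $P_{\sigma_i}P_{\sigma_j}$ corresponds to $g_ig_j$) is well placed, since that is the only point where the computation could silently go wrong.
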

\begin{proof}
  The lemma can be verified by a direct computation.
\end{proof}

Random walk is a typical stochastic process. Random walks with stationary and independent increments over Euclidean spaces are called \emph{L\'{e}vy processes} (see \cite{book_levy_process} for example). A more general framework for the study of such processes is proposed in \cite{book_random_walk_semigroups},  where the random variables are assumed to be over any topological semigroups, rather than just Euclidean spaces. It was shown that the asymptotic behaviors of the processes over compact spaces and noncompact spaces are quite different (see Chapter 2 and 3  in \cite{book_random_walk_semigroups}).

As an application of Lemma \ref{lem:conv_matrix}, together with the well-knowm Perron Theorem (Theorem \ref{thm-Perron}), we give a short proof to the following Theorem \ref{thm-limiting}, which characterize  the asymptotic behavior of  random walks with stationary and independent increments over finite groups.

\begin{theorem} (Perron Theorem, \cite[Theorem 8.4.4]{book_Mat_Analy_Horn_Johnson_2012}) \label{thm-Perron}
	Let $A \in M_n$ be irreducible and nonnegative, and suppose that $n
	\ge 2$. Then
	\begin{enumerate}
		\item $\rho(A)>0$.
		\item $\rho(A)$ is an algebraically simple eigenvalue of $A$.
		\item there is a unique real vector $x=[x_i]$ such that $Ax=\rho(A)x$ and $x_1+\cdots+x_n =1$; this vector is positive.
		\item there is a unique real vector $y=[y_i]$ such that $y^TA=\rho(A)y^T$ and $x_1y_1+\cdots+x_n y_n = 1$; this vector is positive.
	\end{enumerate}
\end{theorem}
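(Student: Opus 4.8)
The plan is to prove the four assertions in the order listed, reducing the irreducible case to the strictly positive case wherever possible by means of the combinatorial characterization that a nonnegative $A$ is irreducible if and only if $(I+A)^{n-1}$ is entrywise positive; I would establish (or cite from an earlier section of Horn--Johnson) this characterization first, since it is the workhorse for transferring positivity. For assertion (1), I would use irreducibility to note that the directed graph of $A$ is strongly connected, so for $n \ge 2$ some closed walk through a vertex yields $\mathrm{tr}(A^m) > 0$ for a suitable $m \ge 2$; since $\mathrm{tr}(A^m) = \sum_i \lambda_i^m$ and every eigenvalue satisfies $|\lambda_i| \le \rho(A)$, the value $\rho(A) = 0$ would force $\mathrm{tr}(A^m) = 0$ for all $m$, a contradiction, so $\rho(A) > 0$.

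The heart of the proof is producing a positive eigenvector for $\rho(A)$. First I would handle a strictly positive matrix $B > 0$ via a fixed-point argument: the map $x \mapsto Bx / (\mathbf{1}^T Bx)$ is continuous from the simplex $\Delta = \{x \ge 0 : \mathbf{1}^T x = 1\}$ into itself (because $Bx > 0$ there), so Brouwer's theorem yields a fixed point $x^{*} \in \Delta$ with $Bx^{*} = \lambda x^{*}$, $\lambda > 0$, and $x^{*} = Bx^{*}/\lambda > 0$. Two elementary comparison inequalities---if $Ax \ge \alpha x$ for some $x \ge 0$, $x \ne 0$ then $\rho(A) \ge \alpha$, and if $Ax \le \beta x$ for some $x > 0$ then $\rho(A) \le \beta$, both obtained by iterating and taking $k$-th roots of norms---applied to $Bx^{*} = \lambda x^{*}$ give $\lambda = \rho(B)$. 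To pass to an irreducible $A$, I would set $A_{\varepsilon} = A + \varepsilon J > 0$ (with $J$ the all-ones matrix), take its positive Perron vector $x_{\varepsilon} \in \Delta$, and let $\varepsilon \to 0^{+}$: by continuity of the spectral radius $\rho(A_{\varepsilon}) \to \rho(A)$, and by compactness of $\Delta$ a subsequence of $x_{\varepsilon}$ converges to some $x \in \Delta$ with $Ax = \rho(A)x$, $x \ge 0$, $x \ne 0$. Positivity of this $x$ then follows from irreducibility: $(I+A)^{n-1} x = (1+\rho(A))^{n-1} x$, and the left side is strictly positive since $(I+A)^{n-1} > 0$ and $x \ge 0$, $x \ne 0$, so $x > 0$; this proves the positivity claim in (3).

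It remains to establish simplicity and uniqueness. For geometric simplicity, given any real eigenvector $z$ of $\rho(A)$ I would choose a scalar $t$ so that $x - tz \ge 0$ has at least one vanishing coordinate; being a nonnegative eigenvector it must be either strictly positive or zero, and the vanishing coordinate forces $x - tz = 0$, so the eigenspace is one-dimensional (complex eigenvectors reduce to this by taking real and imaginary parts). For algebraic simplicity I would apply the existence-and-positivity result to $A^{T}$, obtaining a positive left eigenvector $y > 0$ with $y^{T} A = \rho(A) y^{T}$; if $\rho(A)$ were not algebraically simple then, the geometric multiplicity being $1$, there would be a Jordan chain vector $w$ with $(A - \rho(A) I)w = x$, and left-multiplying by $y^{T}$ gives $0 = y^{T} x = \sum_{i} y_{i} x_{i} > 0$, a contradiction, which yields (2). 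Finally, one-dimensionality of each eigenspace makes the positive right and left eigenvectors unique up to scaling, so the normalizations $\sum_{i} x_{i} = 1$ and $\sum_{i} x_{i} y_{i} = 1$ (the latter legitimate because $x^{T} y > 0$) determine $x$ and $y$ uniquely, giving (3) and (4). I expect the main obstacle to be the existence step---the topological input from Brouwer's theorem for positive matrices together with the continuity/compactness limiting argument---with the algebraic-simplicity argument via the left eigenvector being the second most delicate point; everything else reduces to the comparison inequalities and the irreducibility characterization.
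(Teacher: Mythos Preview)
Your proposal is a correct and standard route to the Perron--Frobenius theorem: the Brouwer fixed-point argument for strictly positive matrices, the perturbation $A_\varepsilon = A + \varepsilon J$ with a compactness limit to reach the irreducible case, the $(I+A)^{n-1}>0$ trick to upgrade a nonnegative eigenvector to a positive one, the $x - tz$ argument for geometric simplicity, and the Jordan-chain contradiction against the positive left eigenvector for algebraic simplicity are all sound as written.

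However, there is nothing in the paper to compare your argument against. Theorem~\ref{thm-Perron} is not proved in the paper; it is quoted verbatim as a known result from Horn and Johnson, \emph{Matrix Analysis}, and is used as a black box in the proof of Theorem~\ref{thm-limiting}. The paper's only original proofs are the one-line verification of Lemma~\ref{lem:conv_matrix} and the short derivation of Theorem~\ref{thm-limiting} from Lemma~\ref{lem:conv_matrix} together with the cited Perron theorem. So your write-up is not a reconstruction of anything the authors did---it is an independent proof of a classical theorem that the paper simply imports.
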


Let $X$ be a  random variable. $\LL(X)$ is used to denote the distribution law of $X$. 

\begin{theorem}\label{thm-limiting}
    Let $G=\{g_1,\ldots,g_n\}$ be a finite group. $(X_m)_{m\ge 1}$ be a $G$-valued stochastic process such that $X_1 = \xi_1$ and $
    X_{m+1} = X_m\xi_{m+1},\ m\ge 1,
    $ where $(\xi_m)_{m\ge 1}$ are i.i.d. random variables whose supports are $G$. Then \[\lim_{m\to \infty}\LL(X_m)= \LL\] exists, and $\LL$ is convolution invariant, i.e., $\LL$ is a uniform distribution over $G$. 
  \end{theorem}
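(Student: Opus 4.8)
The plan is to push the whole question through the homomorphism of Lemma \ref{lem:conv_matrix} into linear algebra. Unwinding the recursion gives $X_m = \xi_1\xi_2\cdots\xi_m$, and since $\xi_1\cdots\xi_{m-1}$ (a function of $\xi_1,\dots,\xi_{m-1}$) is independent of $\xi_m$, an induction on $m$ using Lemma \ref{lem:conv_matrix} and the fact that the $\xi_i$ are identically distributed yields $Con(X_m) = A^m$, where $A := Con(\xi_1)$. Two features of $A$ matter: it is doubly stochastic, so $A\mathbf 1 = \mathbf 1$ and $\mathbf 1^{T}A = \mathbf 1^{T}$ with $\mathbf 1 = (1,\dots,1)^{T}$; and, identifying $[n]$ with $G$ via $k\mapsto g_k$, one checks $(Con(X))_{ij} = \PP[X = g_ig_j^{-1}]$, so that on the one hand $A$ has all entries strictly positive (the support of $\xi_1$ being all of $G$), and on the other hand, reading off the column $j_0$ with $g_{j_0} = e$, the $i$-th entry of that column of $Con(X_m)$ equals $\PP[X_m = g_i]$. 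Hence it suffices to prove $A^m\to \tfrac1n J$, where $J$ is the $n\times n$ all-ones matrix.

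Since $A$ is positive it is irreducible, so Theorem \ref{thm-Perron} applies (the case $n=1$ being trivial). Pairing the identity $A\mathbf 1 = \mathbf 1$ with the positive left Perron vector $y$ of part (4) gives $\rho(A)\,y^{T}\mathbf 1 = y^{T}A\mathbf 1 = y^{T}\mathbf 1$ with $y^{T}\mathbf 1>0$, so $\rho(A) = 1$; then parts (3)--(4) identify the right and left Perron vectors as the positive multiples of $\mathbf 1$. As $A$ is positive it is moreover primitive, so $1$ is the only eigenvalue of modulus $1$ and, being algebraically simple by part (2), the Jordan form of $A$ gives $A^{m}\to xy^{T} = \tfrac1n J$. (If one wishes to bypass primitivity, the same limit is immediate from the decomposition $A = \tfrac{\varepsilon}{n}J + (1-\varepsilon)B$ with $\varepsilon = n\min_{i,j}A_{ij}\in(0,1]$ and $B$ doubly stochastic: using $JB = BJ = J$ and $J^{2} = nJ$, induction gives $A^{m} = \tfrac{1-(1-\varepsilon)^{m}}{n}J + (1-\varepsilon)^{m}B^{m}$, and $B^m$ stays bounded.) By the previous paragraph every entry of $Con(X_m) = A^m$ tends to $1/n$, hence $\PP[X_m = g_i]\to 1/n$ for each $i$, i.e.\ $\LL(X_m)$ converges to the uniform law $\LL$ on $G$. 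Finally $\LL$ is convolution invariant: its convolution matrix is $\tfrac1n\sum_k P_{\sigma_k} = \tfrac1n J$, and $\tfrac1n J\,Con(\eta) = \tfrac1n J$ for every $G$-valued $\eta$ since $Con(\eta)$ is column stochastic, so Lemma \ref{lem:conv_matrix} (together with the fact that the entries of a convolution matrix are exactly the point masses) gives that convolving $\LL$ with any law returns $\LL$.

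The one genuine obstacle is the passage $A^m\to\tfrac1n J$. Theorem \ref{thm-Perron} as quoted only delivers the Perron eigenpair and the algebraic simplicity of $\rho(A)$; it does not by itself exclude other eigenvalues of modulus $1$ or control the Jordan structure, which is exactly what is needed for the powers to converge. So the substance of the argument is to provide that input --- either by invoking that a strictly positive matrix is primitive, or through the explicit $J$-decomposition above --- after which everything reduces to bookkeeping with the homomorphism of Lemma \ref{lem:conv_matrix}.
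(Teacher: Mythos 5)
Your proposal is correct and follows essentially the same route as the paper: push everything through Lemma \ref{lem:conv_matrix} to get $Con(X_m)=A^m$ with $A=Con(\xi_1)$ positive and doubly stochastic, then use Perron theory to get $A^m\to\frac1n J$ and read off the uniform limit. The one place you go beyond the paper is exactly the point you flag at the end: Theorem \ref{thm-Perron} as quoted (the irreducible nonnegative case) gives only the simple Perron eigenvalue $\rho(A)=1$ and the eigenvectors $x=\frac1n\mathbf 1$, $y=\mathbf 1$, and does not by itself rule out other eigenvalues of modulus $1$, so the paper's step $\lim_m A^m=xy^T$ needs the additional input that a strictly positive matrix is primitive (or the stronger form of Perron's theorem for positive matrices); your explicit decomposition $A=\frac{\varepsilon}{n}J+(1-\varepsilon)B$ is a clean elementary substitute that makes the convergence, and its geometric rate, completely transparent. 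Your column-reading of $Con(X_m)$ and your direct verification of convolution invariance are harmless variants of the paper's linear-independence argument for the $P_{\sigma_k}$.
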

  \begin{proof}
    Let $P_{\sigma_1},\ldots, P_{\sigma_n}$ be the permutation matrices as in the definition of convolution matrices. Write $P_{\sigma_k} = (p_{ij}^{(k)})$. By the definitions, $p_{ij}^{(k)} = 1$ means $g_k = g_j\cdot g_i^{-1}$. Thus, for every pair of $i,j\in [n]$, there exists only one $k\in[n]$ such that $p_{ij}^{(k)} = 1$. Then $P_{\sigma_1},\ldots, P_{\sigma_n}$ are linearly independent, and $\sum_{k=1}^n P_{\sigma_k}$ is the matrix $J$ whose entries are all 1.
  
    Since $(\xi_m)_{m\ge 1}$ are i.i.d. random variables, their convolution matrices are the same. Write $A = Con(\xi_m)$. Since the support of $\xi_m$ is $G$, every $p_k = \PP[\xi_m=g_k]$ is positive for $k=1,\ldots,n$, which means $A$ is a positive matrix.
  
    By Lemma \ref{lem:conv_matrix}, $Con(X_m)=A^m$. Since $P_{\sigma_1},\ldots, P_{\sigma_n}$ are linearly independent, $\lim_{m\to \infty}\LL(X_m)$ exists is equivalent to $\lim_{m\to \infty} A^m$ exists, and the limiting distribution is uniquely determined by $\lim_{m\to \infty} A^m$.
  
    Since $A$ is a positive doubly stochastic matrix, by Perron Theorem, the spectral radius $\rho(A)=1$ is an algebraically simple eigenvalue, and $x =\frac{1}{n} [1,\ldots,1]^T, y^T = [1,\ldots,1]$ are the unique left and right eigenvectors corresponding to $\rho(A)$ as described in Theorem \ref{thm-Perron}, respectively. Then
    \[
    \lim_{m\to \infty} A^ m =x\cdot y^T= \frac{1}{n}J = \frac{1}{n}P_{\sigma_1}+\cdots+\frac{1}{n}P_{\sigma_n},
    \] yielding that the limiting distribution of $(X_m)_{m\ge 1}$ is the uniform distribution.
  \end{proof} 
  \section*{Acknowledgments}
  The author would like to thank Prof. Jian Wang of Funjian Normal University for the helpful discussions and suggestions, especially for providing the background of random walks.

\bibliographystyle{elsarticle-num-names}

\bibliography{convolution}

\end{document}